\newtheorem{theorem}{Theorem}[section]
\newtheorem{lemma}[theorem]{Lemma}
\newtheorem{corollary}[theorem]{Corollary}
\newtheorem{definition}[theorem]{Definition}
\begin{document}
\bibliographystyle{plain}

\title{A note on the shameful conjecture}
\author{Sukhada Fadnavis}
\email{Sukhada Fadnavis <sukhada@math.harvard.edu>}
\address{Dept. of Mathematics, Harvard University, One Oxford Street, Cambridge, MA 02138.}
\date{\today}
\maketitle

\begin{abstract}
	Let $P_G(q)$ denote the chromatic polynomial of a graph $G$ on $n$ vertices. The `shameful conjecture' due to Bartels and Welsh states that, $$\frac{P_G(n)}{P_G(n-1)} \geq \frac{n^n}{(n-1)^n}.$$ Let $\mu(G)$ denote the expected number of colors used in a uniformly random proper $n$-coloring of $G$. The above inequality can be interpreted as saying that $\mu(G) \geq \mu(O_n)$, where $O_n$ is the empty graph on $n$ nodes. This conjecture was proved by F. M. Dong, who in fact showed that,  
$$\frac{P_G(q)}{P_G(q-1)} \geq \frac{q^n}{(q-1)^n}$$ for all $q \geq n$. There are examples showing that this inequality is not true for all $q \geq 2$. In this paper, we show that the above inequality holds for all $q \geq 36D^{3/2}$, where $D$ is the largest degree of $G$. It is also shown that the above inequality holds true for all $q \geq 2$ when $G$ is a claw-free graph. 
\end{abstract}

\section{Introduction} 

The chromatic polynomial is an important algebraic object studied in the field of graph coloring. For a graph $G = (V, E)$ with vertex set $V$ and edge set $E$, let $\sigma: V \rightarrow \{1, \ldots , q \}$ be a map. The map $\sigma$ is said to be a proper $q$-coloring of graph $G$ if for every edge in $E$ the endpoints of the edge have distinct images under $\sigma$. Let $P_G(q)$ denote the total number of proper $q$-colorings of $G$. It is well known that $P_G(q)$ is a polynomial in $q$ and is known as the chromatic polynomial. In fact, 
	\begin{equation}
   		 P_G(q) = \sum_{E' \subseteq E} (-1)^{|E'|} q^{C(E')},
	\end{equation}
where $C(E')$ denotes the number of connected components of $(V,E')$. The sum goes over all subsets $E' \subseteq E$ of the edge set $E$. This is easily seen using the inclusion-exclusion principle as is explained in section \ref{method}. \\

Properties of the chromatic polynomial have been studied extensively. For example, the log-concavity of the chromatic polynomial (proved for its coefficients \cite{JuneHuh}) is well studied \cite{Brenti, Brenti2, Stanley3}. There has also been a lot of interest in understanding the roots of the chromatic polynomial \cite{Brown, BRW, Sokal, Borgs, Fernandez}. \\

Bartels and Welsh \cite{BartlesWelsh} studied a Markov chain on colorings, which would help approximate $\mu(G)$, the expected number of colors in a uniformly random proper  $n$-coloring on $G$. They proved that,
\begin{equation}
	\mu(G) = n\left(1 - \frac{P_G(n-1)}{P_G(n)} \right),
\end{equation} 
where $n = |V|$. 

They conjectured that on the set of graphs on $n$ nodes, $\mu$ is minimized when $G$ is the empty graph, $O_n$, on $n$ vertices, that is, 
\begin{equation}
	n\left(1 - \frac{P_G(n-1)}{P_G(n)} \right) = \mu(G) \geq \mu(O_n) =  n\left(1 - \left(\frac{n-1}{n}\right)^n \right).
\end{equation} 

Rewriting this, the conjecture states:
\begin{equation}
	\frac{P_G(n)}{P_G(n-1)} \geq \frac{n^n}{(n-1)^n}.
\end{equation}

They point out that the inequality 
\begin{equation}\label{geninequality}
	\frac{P_G(q)}{P_G(q-1)} \geq \frac{q^n}{(q-1)^n}.
\end{equation}
may not hold true for all $q \geq 2$, as was shown by the following example due to Colin McDiarmid. Let $G = K_{n,n}$, the complete bipartite graph with partitions of size $n$ each with $n \geq 10$. Then,

\begin{equation}
	\frac{P_{G}(3)}{P_G(2)} = \frac{6+6(2^n-2)}{2} = \frac{3(2^n-1)}{1} < \frac{3^{2n}}{2^{2n}}.
\end{equation}

This conjecture came to be dubbed as the `shameful conjecture' and was proved 5 years later by Dong \cite{Dong}.  In fact, Dong proved the stronger result that inequality \ref{geninequality} holds true for all $q \geq n-1$.
In particular, this implies, 
\begin{equation}
	\frac{P_G(n)}{P_G(n-1)} \geq \frac{n^n}{(n-1)^n} \geq e.
\end{equation}

Before that Seymour \cite{Seymour} also showed that, 
\begin{equation}
	\frac{P_G(n)}{P_G(n-1)}  \geq \frac{685}{252} = 2.7182539... < e.
\end{equation}

In this paper, the following complementary result to Dong's result is proved:

\begin{theorem}\label{Shameful}
	Let $G = (V,E)$ be a finite simple graph. Let $D$ denote the largest degree of $G$. Then, for $q > 36 D^{3/2}$ we have, 
\begin{equation}
	\frac{P_G(q+1)}{P_G(q)}  \geq \frac{(q+1)^n}{q^n}.
\end{equation}
\end{theorem}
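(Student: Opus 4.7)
I would begin by rephrasing the assertion in probabilistic form and attacking it via a polymer expansion. Writing $\pi_q(G) := P_G(q)/q^n$ for the probability that a uniform random map $\sigma : V \to [q]$ is a proper coloring, the target inequality $P_G(q+1)/P_G(q) \geq ((q+1)/q)^n$ is equivalent to the monotonicity statement $\pi_{q+1}(G) \geq \pi_q(G)$. Starting from the subgraph expansion
\[
\pi_q(G) = \sum_{E' \subseteq E}(-1)^{|E'|}\, q^{-L(E')}, \qquad L(E') := n - C(V, E'),
\]
I would group terms by the vertex-disjoint decomposition of $(V(E'), E')$ into connected components, recasting $\pi_q(G)$ as the partition function of a polymer gas: polymers are connected edge-subgraphs $H$ of $G$ with at least one edge, carrying activity $z_H := (-1)^{|E(H)|}\, q^{-(|V(H)|-1)}$, and two polymers are compatible exactly when their vertex sets are disjoint.

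Next I would apply a Mayer / cluster expansion (Fern\'andez--Procacci or Koteck\'y--Preiss style) to obtain an absolutely convergent series for $\log \pi_q(G)$. Combined with the standard tree bound, which controls the number of connected subgraphs on $k$ vertices through a fixed vertex by $(eD)^{k-1}$, convergence and effective tail control hold once $q$ is large enough in $D$. Separating the single-edge polymer contribution gives $\log \pi_q(G) = -|E|/q + T(q)$, where $T(q)$ collects the remaining cluster terms (polymers on $\geq 3$ vertices, plus multi-polymer clusters), so $T'(q)$ starts at order $q^{-3}$.

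Differentiating in $q$, the dominant term contributes $|E|/q^2$ while the tail contributes at most $|E|\cdot \varepsilon(q)$ with $\varepsilon(q) \to 0$ as $q/D \to \infty$. It remains to bound $\varepsilon(q)$ quantitatively: the exponent $3/2$ should arise from balancing the subgraph count (growing like $D^{k-1}$) against the activity decay ($q^{-(k-1)}$) together with the $|E| = O(nD)$ prefactor, optimized so that positivity of the derivative holds once $q > 36 D^{3/2}$. Monotonicity of $\log \pi_q$, hence of $\pi_q$, then yields $\pi_{q+1}(G) \geq \pi_q(G)$ and rearranges to the stated inequality.

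The main obstacle is producing $\varepsilon(q)$ with exactly the right shape. A crude cluster-expansion estimate readily yields a threshold of the form $q > cD$ with a large $c$, but extracting the specific $q > 36 D^{3/2}$ requires careful use of the alternating signs in $z_H$ and of the cluster Ursell coefficients, with any slack worsening either the exponent of $D$ or the constant $36$. Should this bookkeeping resist, two fallbacks I would try are (i) the edge-by-edge factorization $\pi_q(G) = \prod_i (1 - p_i(q))$ with $p_i(q) = \mathbb{P}_{G_i}[\sigma(u_i)=\sigma(v_i)]$, reducing the problem to term-by-term monotonicity of the coincidence probabilities in $q$; and (ii) a zero-localization argument in the spirit of Sokal's framework, forcing $(d/dq)\log \pi_q$ to be sign-definite on the relevant range of $q$.
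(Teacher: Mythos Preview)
Your overall strategy---rewrite the inequality as monotonicity of $\pi_q(G)=P_G(q)/q^n$, expand $\log\pi_q$ in powers of $1/q$, isolate the leading $-|E|/q$ term, and show the derivative is positive by bounding the tail---matches the paper's. But the paper's execution is far simpler than your main line and is essentially your fallback~(ii). Rather than building a polymer model and invoking Koteck\'y--Preiss or Fern\'andez--Procacci, the paper quotes Sokal's theorem that every root $\alpha_i$ of $P_G$ satisfies $|\alpha_i|\le 8D$, writes
\[
\log\pi_q(G)=\sum_{i=1}^n\log\Bigl(1-\frac{\alpha_i}{q}\Bigr)=-\sum_{k\ge1}\frac{1}{k}\,q^{-k}\sum_{i=1}^n\alpha_i^k,
\]
and reads off $|c_k|\le\frac{n}{k}(8D)^k\le\frac{2|E|}{k}(8D)^k$ directly. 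The first two coefficients are computed exactly ($c_1=-|E|$ and $c_2=-|T|-|E|/2$); both contribute positively to $f'(q)$, and the tail from $k\ge3$ is a geometric series. No Ursell functions, tree-graph bounds, or sign cancellations enter.

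There is also a conceptual slip in your plan concerning the exponent. You say a crude cluster estimate already yields a threshold $q>cD$ and that reaching $q>36D^{3/2}$ then requires extra care with alternating signs. This is backwards: $36D^{3/2}$ is a \emph{weaker} constraint than $cD$ once $D$ is moderately large, so if your crude estimate really delivered monotonicity for all $q>cD$ you would have resolved the paper's stated open conjecture that a linear-in-$D$ threshold suffices. In the paper's argument the $3/2$ arises for a mundane reason: the root bound gives $|c_k|\lesssim |E|\,(8D)^k$ rather than $|E|\,(8D)^{k-1}$, so after factoring out the main term $|E|/q^2$ the tail carries an extra factor $16D\cdot(8D/q)^2$, and forcing this below $1$ gives $q^2\gtrsim D^3$. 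A cluster expansion, pushed through Cauchy's estimate on the boundary of Sokal's zero-free disk, produces the same coefficient bound and hence the same $D^{3/2}$; getting the threshold down to linear in $D$ is genuinely harder, not a matter of bookkeeping.
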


We conjecture that this result can be improved to be true for $q > C D$ for some constant $C$, but do not have a proof of that as yet. \\

It turns out that the situation is much nicer for a class of graphs called claw-free graphs:
\begin{definition} A claw is the bipartite graph $K_{1,3}$. A graph is said to be claw-free if it does not have any induced subgraphs isomorphic to $K_{1,3}$. \end{definition} 

Claw-free graphs have been studied in great detail and completely classified by Seymour and Chudnovsky \cite{SC1, SC2, SC3, SC4, SC5}.\\

Examples of clawfree graphs include complete graphs, cycles, complements and triangle-free graphs. Line graphs form an important class of claw-free graphs, which we will use later. Given a graph $G= (V,E)$, its line graph $L(G)$ is defined as follows. The vertex set of $L(G)$ consists of the edges $E$ of $G$. And there is an edge between $e_1, e_2 \in E$ if $e_1, e_2$ share a common end point. It is easy to see that line graphs are claw-free. \\

For claw-free graphs we prove:
\begin{theorem} If $G$ is a claw-free graph then, 
	\begin{equation}
		\frac{P_G(q+1)}{P_G(q)}  \geq \frac{(q+1)^n}{q^n}.
	\end{equation}
	for all integers $q \geq 1$. 
\end{theorem}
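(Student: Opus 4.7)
The plan is to reformulate the desired inequality as a negative-dependence statement about a uniformly random proper $(q+1)$-coloring of $G$ and then reduce it to a real-rootedness claim about an auxiliary polynomial. By symmetry under relabeling of colors, every vertex receives color $q+1$ with probability exactly $1/(q+1)$ in a uniformly random proper $(q+1)$-coloring $\sigma$, so if $N$ counts the vertices assigned color $q+1$, then $\mathbb{E}[N] = n/(q+1)$. Conditioning on $S = \sigma^{-1}(q+1)$, which must be independent in $G$, yields
\begin{equation}
P_G(q+1) = \sum_{S \text{ indep in } G} P_{G-S}(q),
\end{equation}
so the probability generating function of $N$ is $f(t) := \mathbb{E}[t^N] = Q(t)/Q(1)$, where
\begin{equation}
Q(t) := \sum_{S \text{ indep in } G} P_{G-S}(q)\, t^{|S|}.
\end{equation}
Since $Q(0) = P_G(q)$ and $Q(1) = P_G(q+1)$, the target inequality is equivalent to the single assertion $f(0) \leq (q/(q+1))^n$.

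The next step is to show that, for every claw-free $G$ and every $q \geq 1$, the polynomial $Q(t)$ is real-rooted. Granting this, because $Q$ has non-negative coefficients (as each $P_{G-S}(q) \geq 0$) Descartes' rule forces the roots to be non-positive, and we may write $f(t) = \prod_{i=1}^n (1 - p_i + p_i t)$ for some $p_i \in [0, 1]$, padding with zero-Bernoulli factors if $\deg Q < n$. Differentiating at $t = 1$ gives $\sum_i p_i = f'(1) = \mathbb{E}[N] = n/(q+1)$, and the AM--GM inequality then delivers
\begin{equation}
f(0) = \prod_{i=1}^n (1 - p_i) \leq \left(1 - \frac{1}{n}\sum_{i=1}^n p_i\right)^{\!n} = \left(\frac{q}{q+1}\right)^{\!n},
\end{equation}
which is exactly the inequality we want. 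This is the standard Newton-type bound that converts real-rootedness into the required negative-dependence inequality.

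The heart of the matter, and the main obstacle, is therefore to prove real-rootedness of $Q(t)$ for claw-free $G$. I would attempt to derive this from the Chudnovsky--Seymour theorem that the independence polynomial of a claw-free graph is real-rooted. One natural strategy is to build an auxiliary graph $\widetilde{G}$ whose (possibly multivariate) independence polynomial specializes, or degenerates in a limit, to $Q(t)$, and to check that $\widetilde{G}$ inherits claw-freeness from $G$. Care is required: the seemingly natural construction that attaches $q$ ``color-slot'' vertices to each $v \in V(G)$ together with a distinguished ``special-color'' vertex and places matched edges across $E(G)$ fails to be claw-free as soon as some vertex of $G$ has two non-adjacent neighbors, so one must rely on operations known to preserve claw-freeness, essentially the substitution of vertices by cliques. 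An alternative route is a direct induction on $|V(G)|$ using a deletion recursion of the form $Q_G(t) = A(t)\, Q_{G-v}(t) + B(t)\, Q_{G-N[v]}(t)$, then showing that for a carefully chosen pivot $v$ the roots of the two summands interlace, in the spirit of the Chudnovsky--Seymour argument; the claw-free hypothesis is needed precisely to guarantee the existence of such a good pivot.
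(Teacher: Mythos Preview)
Your reformulation and the AM--GM step are correct, but the argument is incomplete: everything is reduced to the real-rootedness of
\[
Q(t)=\sum_{S\ \text{indep.\ in }G}P_{G-S}(q)\,t^{|S|},
\]
and this is not proved. You yourself call it ``the heart of the matter'' and only sketch two lines of attack, each with an acknowledged obstruction. The auxiliary-graph construction you describe is not claw-free in general, as you note; and the interlacing induction would need a genuinely new argument, since Chudnovsky--Seymour treats the unweighted independence polynomial, whereas your weights $P_{G-S}(q)$ depend on $S$ globally and non-multiplicatively. So as it stands there is a real gap, not a routine verification left to the reader.

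The paper's proof, by contrast, is complete and entirely elementary; it neither uses nor proves any real-rootedness. It establishes the stronger statement that the probability $P_G(p_1,\dots,p_q)$ of obtaining a proper coloring, when colors are assigned independently with probabilities $p_i$, is Schur-concave in $(p_1,\dots,p_q)$. Fixing $p_3,\dots,p_q$ and conditioning on which vertices receive colors $3,\dots,q$ reduces the question to a two-color problem on an induced subgraph $H'$; for a nonzero contribution $H'$ must be bipartite as well as claw-free, hence of maximum degree~$2$, so $H'$ is a disjoint union of paths and even cycles. For such graphs the two-color probability is checked by hand to be maximized at $p_1=p_2$. Comparing the distribution $(1/q,\dots,1/q,0)$ on $q+1$ colors with the uniform one then yields the desired inequality. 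The only structural input is the one-line observation that a claw-free bipartite graph has maximum degree~$2$; no stability theory and no Chudnovsky--Seymour are needed.
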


In fact, a generalization is discussed and proved in section \ref{clawfree}. \\

An interesting corollary for edge-colorings is obtained as a consequence of the above theorem. To discuss this corollary we consider a different interpretation of the above problem. Note that the inequality 
\begin{equation}
	\frac{P_G(q+1)}{P_G(q)}  \geq \frac{(q+1)^n}{q^n}
\end{equation}
can be re-written as,
\begin{equation}
	\frac{P_G(q+1)}{(q+1)^n}  \geq \frac{P_G(q)}{q^n}.
\end{equation}
Suppose the vertices of graph $G$ are independently and uniformly colored with one of $q$ colors. The quantity $P_G(q)/q^n$ denotes the probability that such a random coloring is a proper coloring of $G$.  Let $\sigma_q$ denote a uniformly chosen random $q$-coloring of $G$. Then the above inequality can be interpreted as saying that:
\begin{equation}
	\Pr (\sigma_{q+1} \text{ is proper}) \geq \Pr (\sigma_{q} \text{ is proper}).
\end{equation}

Thus, for a clawfree graph $G$, we have that the above inequality is true for all integers $q \geq 1$. \\

Now we consider the edge-coloring problem. An edge-coloring of a graph $G= (V,E)$ with $q$ colors is defined to be a map $f: E \rightarrow \{1, \ldots, q\}$. An edge-coloring of $G$ is said to be proper if no two intersecting edges are mapped to the same number. Let $p_G(E, q)$ denote the probability that a uniformly chosen random edge-coloring on $G$ with $q$ colors is proper. 
Then, we have the following corollary: 
\begin{corollary}
	For a finite, simple graph $G$, we have $p_G(E,q + 1) \geq p_G(E, q)$ for all integers $q \geq 1$. 
\end{corollary}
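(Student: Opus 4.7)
The plan is to reduce the edge-coloring statement to the vertex-coloring theorem for claw-free graphs by passing to the line graph. The key identification is that an edge-coloring of $G$ with $q$ colors is literally the same combinatorial object as a vertex-coloring of the line graph $L(G)$ with $q$ colors, and under this identification proper edge-colorings correspond bijectively to proper vertex-colorings, since two edges of $G$ share an endpoint if and only if the corresponding vertices of $L(G)$ are adjacent.

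First I would write out the identity
\begin{equation}
p_G(E,q) = \frac{P_{L(G)}(q)}{q^{|E|}},
\end{equation}
which follows immediately from the bijection above together with the fact that $L(G)$ has exactly $|E|$ vertices, so the total number of (not necessarily proper) edge-colorings of $G$ with $q$ colors equals $q^{|E|}$, the denominator in the definition of the vertex-coloring probability for $L(G)$.

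Next I would invoke the fact, recorded in the introduction, that line graphs are always claw-free. Applying the previous theorem (the claw-free case of the shameful inequality) to the graph $L(G)$, with $n$ replaced by $|E|$, yields
\begin{equation}
\frac{P_{L(G)}(q+1)}{(q+1)^{|E|}} \geq \frac{P_{L(G)}(q)}{q^{|E|}}
\end{equation}
for every integer $q \geq 1$. Combining this with the displayed identity rewrites exactly as $p_G(E,q+1) \geq p_G(E,q)$, which is the conclusion.

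There is no real obstacle here: the entire content is the line-graph reduction, and the inequality itself is borrowed verbatim from the claw-free theorem. The only thing to be careful about is to state the bijection cleanly and to match up $n$ (the number of vertices in the theorem) with $|E|$ (the number of vertices of $L(G)$).
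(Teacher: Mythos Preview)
Your proposal is correct and follows exactly the paper's own argument: the corollary is deduced immediately from the claw-free theorem by observing that proper edge-colorings of $G$ correspond to proper vertex-colorings of the line graph $L(G)$, which is claw-free. The identity $p_G(E,q)=P_{L(G)}(q)/q^{|E|}$ you write out is precisely the translation the paper has in mind.
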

Thus, the case for edge colorings is nicer than the case for vertex colorings. In fact, the edge coloring problem is the same as the vertex coloring problem on the line graph of $G$. Line graphs are claw-free. Thus, the edge-coloring result immediately follows from the vertex coloring result for claw-free graphs. 

The remaining paper is organized as follows. Theorem \ref{Shameful} is proved in section \ref{method} and the proof of the claw-free graph case is provided in section \ref{clawfree}.  

\section{Proof of Theorem \ref{Shameful}}\label{method}

We begin by showing that the chromatic function is in fact a polynomial. As explained in the introduction, this can be seen using the inclusion-exclusion principal.  \\
For $E' \subseteq E$, let $N_{E'}$ denote the number of proper $q$-colorings of $G$ in which the edges in $E'$ are all monochromatic. Then, the inclusion exclusion principle gives:
\begin{equation}
	P_G(q) = \sum_{E' \subseteq E} (-1)^{|E'|} N_{E'}.
\end{equation}
Note that, $N_{E'} =  q^{C(E')}$ where $C(E')$ denotes the number of connected components of $(V,E')$.  Hence, 
\begin{equation}
   	P_G(q) = \sum_{E' \subseteq E} (-1)^{|E'|} q^{C(E')}.
\end{equation}

Sokal \cite{Sokal} and Borgs \cite{Borgs} provided a bound on the roots of the chromatic polynomial in terms of the largest degree $D$ of $G$. We state their result below:

\begin{theorem}\label{rootbound}[Sokal , Borgs] Let $G = (V,E)$ be a finite, simple graph with highest degree $D$. Let 
\begin{equation}\label{Kdefined}
	K(a) = \frac{a + e^a}{\log(1 + a e^{-a})},
\end{equation}
 and let $K^* = \min_{a \geq 0} K(a)$. Then, the roots of the chromatic polynomial $P_G(q)$ are bounded above in absolute value by $K^*D$.  Note that $K^* \leq 7.963907$. 
\end{theorem}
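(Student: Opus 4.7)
The plan is to establish the root bound via a cluster expansion of $\log P_G(q)$, following the Sokal--Borgs strategy. Starting from the subgraph expansion $P_G(q)/q^n = \sum_{E' \subseteq E}(-1)^{|E'|} q^{C(E')-n}$, I would rewrite the right-hand side as a polymer gas partition function by grouping terms according to the connected components of $(V,E')$. Since isolated vertices contribute a factor of $1$, one obtains
$$\frac{P_G(q)}{q^n} \;=\; \sum_{\{\gamma_1,\ldots,\gamma_k\}} \prod_{i=1}^k w(\gamma_i),$$
where the sum runs over all finite collections of pairwise vertex-disjoint ``polymers'' $\gamma_i$; each polymer $\gamma$ is a connected spanning subgraph of $G$ on some vertex subset $V(\gamma)$ with $|V(\gamma)|\ge 2$, carrying weight $w(\gamma)=(-1)^{|E(\gamma)|}\,q^{1-|V(\gamma)|}$.

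The next step is the formal cluster expansion of $\log(P_G(q)/q^n)$, which expresses the logarithm as a series indexed by clusters (multisets of polymers whose incompatibility graph is connected), with coefficients given by Ursell functions. To prove absolute convergence of this series---and thereby $P_G(q)\ne 0$---I would apply a Kotecky--Preiss-type criterion. In the present hard-core (vertex-disjoint) setting, with weight function $\alpha\,|V(\gamma)|$ for a parameter $\alpha>0$ to be optimized, the criterion reduces to the pointwise condition
$$\sum_{\gamma:\,v\in V(\gamma)} |w(\gamma)|\,e^{\alpha|V(\gamma)|} \;\le\; \alpha \qquad \text{for every } v\in V.$$

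The crux is to exploit the alternating sign $(-1)^{|E(\gamma)|}$ hidden in $w(\gamma)$, since a naive absolute-value bound on the number of connected spanning subgraphs on a fixed vertex set is far too weak. I would invoke the Penrose tree-graph identity (either at the level of the inner sum $\sum_{\gamma:V(\gamma)=V'}(-1)^{|E(\gamma)|}$ or, equivalently, at the level of the Ursell coefficients), which dominates such alternating sums of connected-subgraph weights by sums over spanning trees. This reduces verification to an estimate on rooted subtrees $T$ of $G$ through $v$ of the shape $\sum_{T\ni v} q^{1-|V(T)|}\,e^{\alpha|V(T)|}$. In a graph of maximum degree $D$, growing such a subtree offers at most $D$ new neighbors at each step, so a standard branching estimate controls the number of $k$-vertex subtrees through $v$, and summing over $k$ produces a convergent geometric-type series that can be compared to $\alpha$.

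Tracking the constants through the tree-counting step yields the precise threshold $|q|\ge K(\alpha)\,D$ with $K(\alpha)=(\alpha+e^\alpha)/\log(1+\alpha e^{-\alpha})$. Minimizing over $\alpha\ge 0$ gives $K^*=\min_{\alpha\ge 0}K(\alpha)$, and a direct numerical computation yields $K^*\le 7.963907$. The main obstacle, in my view, is producing exactly the combination $(\alpha+e^\alpha)/\log(1+\alpha e^{-\alpha})$: the logarithmic factor in the denominator emerges from a careful resummation of the tree-weight series tuned to the Penrose identity, and obtaining this sharp form (rather than a cruder exponential bound with a worse constant) requires delicate bookkeeping of the sign cancellations that the Penrose identity makes explicit.
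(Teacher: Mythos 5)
First, a point about scope: the paper does not prove this theorem at all --- it is imported verbatim from Sokal and Borgs and used as a black box, so there is no ``paper's proof'' to compare against. Judged on its own terms, your outline is a faithful reconstruction of Sokal's actual argument: the rewriting of $P_G(q)/q^n$ as a hard-core polymer gas whose polymers live on vertex subsets of size at least two with weight $q^{1-|V(\gamma)|}$ times a signed sum over connected spanning subgraphs, the Penrose tree-graph domination of that signed sum by a sum over spanning trees (valid here because $|1+v_e|=0\le 1$ for $v_e=-1$), a Kot\'eck\'y--Preiss/Dobrushin-type convergence condition with weight $\alpha|V(\gamma)|$ reduced by subadditivity to a per-vertex inequality, and a rooted-subtree count in a graph of maximum degree $D$. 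This is the right strategy and the right sequence of lemmas, and the source of each factor in $K(a)=(a+e^a)/\log(1+ae^{-a})$ is correctly identified.

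That said, what you have written is a roadmap rather than a proof. The two steps that carry essentially all of the difficulty are stated but not executed: (i) the Penrose identity itself (or its consequence that the polymer weight is bounded in absolute value by $|q|^{1-|V(\gamma)|}$ times the number of spanning trees of the induced subgraph), and (ii) the generating-function bound for rooted subtrees through a fixed vertex in a degree-$D$ graph, which is where the denominator $\log(1+ae^{-a})$ actually comes from and which requires a genuine computation (a recursion for the rooted-tree generating function on the $D$-regular tree and a verification that the Kot\'eck\'y--Preiss inequality closes exactly when $|q|\ge K(a)D$). You flag this yourself as ``the main obstacle,'' which is honest, but it means the quantitative content of the theorem --- the specific function $K(a)$ and the constant $7.963907$ --- is asserted rather than derived. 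As a citation-level justification of why the theorem is true and where to find the proof, this is fine; as a standalone proof it is incomplete precisely at the point where the constant is produced.
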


We use this bound on the roots of the chromatic polynomial to get bounds on the coefficients of its log expansion. Note that we assume $G$ to be connected henceforth since proving Theorem \ref{Shameful} for connected graphs implies the result for a disjoint union of connected graphs by the multiplicative property of the chromatic polynomial. 

\begin{theorem} If
	\begin{equation}\label{LogExpansion}
		\log \frac{P_G(q)}{q^n} = \sum_{N =1}^{\infty} c_N q^{-N}.
	\end{equation}
Then, $|c_k| \leq \frac{2|E|}{k} (8D)^k$
\end{theorem}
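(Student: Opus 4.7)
The plan is to realize the coefficients $c_k$ as essentially power sums of the nonzero roots of $P_G(q)$, and then apply the Sokal--Borgs bound termwise.

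Concretely, since we have reduced to connected $G$, the chromatic polynomial factors as $P_G(q) = q \prod_{i=1}^{n-1}(q - r_i)$, where $r_1, \ldots, r_{n-1}$ are the nonzero complex roots (listed with multiplicity; $0$ appears as a simple root for a connected graph). Dividing through by $q^n$ and taking logs gives the identity
\begin{equation*}
\log \frac{P_G(q)}{q^n} = \sum_{i=1}^{n-1} \log\!\left(1 - \frac{r_i}{q}\right) = -\sum_{k=1}^{\infty} \frac{1}{k}\left(\sum_{i=1}^{n-1} r_i^k\right) q^{-k},
\end{equation*}
valid (absolutely) for $|q|$ larger than every $|r_i|$. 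By uniqueness of the Laurent expansion at infinity, I would match this with the hypothesized expansion \eqref{LogExpansion} to conclude that
\begin{equation*}
c_k = -\frac{1}{k}\sum_{i=1}^{n-1} r_i^k.
\end{equation*}

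Next I would invoke Theorem \ref{rootbound} of Sokal and Borgs: every root $r_i$ satisfies $|r_i| \le K^* D$ with $K^* \le 7.963907 < 8$, so $|r_i|^k \le (8D)^k$ for each $i$ and each $k \ge 1$. The triangle inequality applied to the expression for $c_k$ therefore yields
\begin{equation*}
|c_k| \le \frac{1}{k}\sum_{i=1}^{n-1} |r_i|^k \le \frac{n-1}{k}(8D)^k.
\end{equation*}
To finish, I would use that a connected graph on $n$ vertices has $|E| \ge n-1$, so $n-1 \le |E| \le 2|E|$, and the stated bound $|c_k| \le \frac{2|E|}{k}(8D)^k$ follows immediately.

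The only real subtlety is the step where one identifies coefficients: one must ensure the two Laurent series agree as formal series, which is fine because both converge on the common domain $|q| > K^*D$ and agree there as functions, so the coefficients must match. Beyond that, the argument is a short computation; the heavy lifting is done entirely by the Sokal--Borgs root bound, and the factor $2$ in the conclusion is merely a convenient slack absorbing the inequality $n-1 \le 2|E|$.
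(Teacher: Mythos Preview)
Your argument is correct and essentially identical to the paper's: both express $c_k$ as $-\tfrac{1}{k}$ times the $k$th power sum of the roots of $P_G$, bound each root by $8D$ via Sokal--Borgs, and then convert the resulting factor of $n$ (or $n-1$, in your slightly sharper version where you split off the zero root) into $2|E|$ using connectedness. The only cosmetic difference is that the paper keeps all $n$ roots including $0$ and uses $n\le 2|E|$, whereas you use $n-1\le |E|$; both lead to the same stated bound.
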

\begin{proof} Suppose the roots of $P_G(q)$ are $\alpha_1, \ldots, \alpha_n$.  Let $$f(q) = \log \frac{P_G(q)}{q^n} .$$ Then, 

\begin{equation}
	f(q) = \log \frac{\prod_{i=1}^n (q-\alpha_i)}{q^n} = \sum_{i=1}^n \log\left(1 - \frac{\alpha_i}{q} \right).
\end{equation}
By Sokal's theorem, $|\alpha_i| < 8D$, hence, for $q > 8D$ we have, 
\begin{equation}
	f(q) =  \sum_{i=1}^n \log\left(1 - \frac{\alpha_i}{q} \right) = - \sum_{k = 1}^{\infty} \frac{1}{k} \frac{1}{q^k} \sum_{i=1}^{n} \alpha_i^k.
\end{equation}
Thus,
\begin{equation}
	c_k = - \frac{1}{k} \sum_{i=1}^{n} \alpha_i^k,
\end{equation}
and hence, 
\begin{equation}
	|c_k| \leq \frac{1}{k} \sum_{i=1}^{n} |\alpha_i|^k \leq \frac{1}{k} n (8D)^k \leq \frac{2|E|}{k}(8D)^k.
\end{equation}
 The last inequality follows from assuming that $G$ is connected and $n \geq 2$, hence, $|E| \geq n/2$. 
\end{proof}

The next result finds the exact values of $c_1, c_2$. 

\begin{lemma}
In the above equation $c_1 = -|E|$ and $c_2 = -|T| - |E|/2$, where $|E|$ is the number of edges and $|T|$ is the number of triangles in $G$. 
\end{lemma}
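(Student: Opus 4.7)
My plan is to reuse the identity $c_k=-\frac{1}{k}\sum_{i=1}^n \alpha_i^k$ established in the proof of the preceding theorem and combine it with Newton's identities, which convert power sums of the roots into elementary symmetric functions. Writing $P_G(q)=q^n-e_1 q^{n-1}+e_2 q^{n-2}-\cdots$, one has $p_1=e_1$ and $p_2=e_1^2-2e_2$, so the task reduces to extracting the coefficients of $q^{n-1}$ and $q^{n-2}$ from $P_G(q)$. Then $c_1=-p_1=-e_1$ and $c_2=-\tfrac{1}{2}p_2=e_2-\tfrac{1}{2}e_1^2$, and I just need to evaluate $e_1$ and $e_2$ combinatorially.

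For this extraction I will use the inclusion-exclusion formula $P_G(q)=\sum_{E'\subseteq E}(-1)^{|E'|}q^{C(E')}$ from the start of this section. The term $C(E')=n-1$ only arises when $E'$ is a single edge, so the $q^{n-1}$ coefficient is $-|E|$ and hence $e_1=|E|$. For $C(E')=n-2$ the non-isolated part of $(V,E')$ must have exactly two more vertices than components; since $G$ is simple, the only possibilities are (i) two disjoint edges, (ii) a path on three vertices (two edges sharing a vertex), and (iii) a triangle. The first two options together exhaust every unordered pair of distinct edges and each contributes sign $(-1)^2=+1$, while each triangle contributes sign $(-1)^3=-1$. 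Summing gives the coefficient of $q^{n-2}$ as $\binom{|E|}{2}-|T|$, so $e_2=\binom{|E|}{2}-|T|$.

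Substituting into the formulas above yields $c_1=-|E|$ and
$$c_2 \;=\; e_2-\tfrac{|E|^2}{2} \;=\; \binom{|E|}{2}-|T|-\tfrac{|E|^2}{2} \;=\; -\tfrac{|E|}{2}-|T|,$$
as claimed. The only genuine combinatorial step is the classification of subsets with $C(E')=n-2$ in the second paragraph; the rest is symbolic manipulation. I do not expect any real obstacle, since larger subgraphs (with $|E'|\ge 4$) automatically satisfy $C(E')\le n-3$ in a simple graph and cannot affect the coefficient of $q^{n-2}$.
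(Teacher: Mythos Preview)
Your proposal is correct and follows essentially the same approach as the paper: both extract the coefficients of $q^{n-1}$ and $q^{n-2}$ from the inclusion--exclusion formula (obtaining $-|E|$ and $\binom{|E|}{2}-|T|$), and then convert to power sums of the roots via $\sum\alpha_i = -a_{n-1}$ and $\sum\alpha_i^2 = a_{n-1}^2 - 2a_{n-2}$, which is exactly the Newton identity you invoke. The only difference is that you phrase the symmetric-function step in the language of $e_1,e_2,p_1,p_2$, whereas the paper works directly with the coefficients $a_{n-1},a_{n-2}$.
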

\begin{proof} Let 
\begin{equation}
	P_G(q) = \sum_{E' \subseteq E} (-1)^{|E'|} q^{C(E')} = \sum_{i=0}^n a_i q^i,
\end{equation} 
where $C(E')$ denotes the number of connected components of $(V,E')$. Thus, $a_n = 1$ since $C(E') = n$ only when $E' = \emptyset$. Further, $a_{n-1} = -|E|$ since $C(E') = n-1$ only when $E'$ consists of a single edge. To compute $a_{n-1}$ we note that $C(E') = n-2$ only when $E'$ consists exactly of two edges (joint or disjoint) or three edges forming a triangle. So,\begin{equation}
	a_{n-2} = \binom{|E|}{2} - |T|. 
\end{equation}
We also know that, 
\begin{equation}
	a_{n-1} = - \sum_{i=1}^n \alpha_i, \text{ and } a_{n-2} = \sum_{1 \leq i < j \leq n} \alpha_i \alpha_j. 
\end{equation}
Thus, 
\begin{equation}
	\sum_{i=1}^n \alpha_i^2 = a_{n-1}^2 - 2a_{n-2} = |E|^2 - |E|(|E|-1) + 2|T| =  |E| + 2|T|. 
\end{equation}

As seen above, 
\begin{equation}
	c_k = - \frac{1}{k}\sum_{i=1}^n \alpha_i^k.
\end{equation}
Thus, 
$c_1 = -|E|$ and $c_2 =  - |T| - \frac{|E|}{2}. $

\end{proof}

Now we can prove the main result of this paper: 

\begin{proof}
	From the above theorems we have,
	\begin{equation}
        		\log \frac{P_G(q)}{q^n} = \sum_{N =1}^{\infty} c_N q^{-N} = -\frac{|E|}{q} - \frac{|T| + |E|/2}{q^2} + \sum_{N = 3}^{\infty} c_Nq^{-N}.
	\end{equation}
\end{proof}

Let $f(q) = \log \frac{P_G(q)}{q^n}$. Restricting to $q > 8D$ and taking the derivative gives, 
\begin{equation}
	\begin{split}
		& f'(q) = \frac{|E|}{q^2} + \frac{2|T| + |E|}{q^3} - \sum_{N = 3}^{\infty} Nc_Nq^{-N-1} \geq \frac{|E|}{q^2} + \frac{2|T| + |E|}{q^3} - \sum_{N = 3}^{\infty} 2|E|(8D)^Nq^{-N-1} \\
		& \geq \frac{|E|}{q^2} \left( 1 - \sum_{N = 3}^{\infty} 2(8D)^Nq^{-N+1} \right) = \frac{|E|}{q^2} \left( 1 - 16D\sum_{N = 3}^{\infty} \left(\frac{8D}{q}\right)^{N-1} \right)\\
		& = \frac{|E|}{q^2} \left( 1 - 16D \left(\frac{8D}{q}\right)^{2} \frac{1}{1-\frac{8D}{q}} \right).
	\end{split}
\end{equation}

When $q > 36D^{3/2}$, and $D \geq 2$ we have, 

\begin{equation}
		\frac{|E|}{q^2} \left( 1 - 16D \left(\frac{8D}{q}\right)^{2} \frac{1}{1-\frac{8D}{q}} \right)  \geq \frac{|E|}{q^2} \left( 1 - 			\frac{1024D^3}{q^2} \times \frac{6}{5} \right) >  \frac{|E|}{q^2} \left( 1 - \frac{1229D^3}{1296D^3}  \right) > 0. 
\end{equation} 

In the above we assumed that $D \geq 2$. This is OK since the statement of the theorem is easily checked to be true when $D = 1$. 
Thus, $P_G(q)/q^n$ is increasing when $q > 36D^{3/2}$.  This completes the proof. 

\section{The case of claw-free graphs}\label{clawfree}

Recall that the inequality
\begin{equation}
	\frac{P_G(q)}{q^n} \geq \frac{P_G(q-1)}{(q-1)^n}
\end{equation}
 can be interpreted as saying that the probability that a uniformly random $q$-coloring of $G$ is a proper coloring is greater than the probability that a uniformly random $(q-1)$-coloring of $G$ is a proper coloring. To prove the result for claw-free graphs we will consider a slightly more general situation.\\

Consider a graph $G$ on $n$ vertices.
Suppose the vertices are colored independently at random with $q$ colors occurring with probabilities $p_1, \ldots , p_q$. Let $P_G(p_1, \ldots, p_q)$ denote the probability that the random coloring thus obtained is a
proper coloring. Note that in this setting, 
\begin{equation}
    	\frac{P_G(q)}{q^n} =  P_G\left(\frac{1}{q}, \ldots, \frac{1}{q}\right).
\end{equation}

\begin{theorem}\label{clawfreeproof} If $G$ is claw-free then $P_G(p_1, \ldots p_q)$ is maximized when $p_1 = \cdots = p_q = 1/q$. In fact $P_G$ is Schur-concave on the set of probability distributions $\textbf{p} = (p_1,\ldots, p_q)$. In particular, 
\begin{equation}
	\frac{P_G(q)}{q^n} \geq \frac{P_G(q-1)}{(q-1)^n}
\end{equation}
for all $q \geq 2$. 
\end{theorem}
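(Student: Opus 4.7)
The plan is to prove the stronger Schur-concavity claim directly; the inequality $P_G(q)/q^n \geq P_G(q-1)/(q-1)^n$ then follows immediately because the vector $(1/q,\ldots,1/q) \in \mathbb{R}^q$ is majorized by $(1/(q-1),\ldots,1/(q-1),0)$. Since $P_G$ is symmetric in its arguments, it suffices, by the two-coordinate criterion for Schur-concavity, to show the following: with $p_3,\ldots,p_q$ held fixed and with $s := p_1 + p_2$ held constant, $P_G$ is a non-decreasing function of the product $p_1 p_2$.

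To reduce to a tractable object, I would decompose each proper coloring $\sigma\colon V \to \{1,\ldots,q\}$ according to $S^c := \sigma^{-1}(\{3,\ldots,q\})$ and its restriction $\tau := \sigma|_{S^c}$. Writing $H := G[S]$ for the induced subgraph on $S = V \setminus S^c$, this gives
\begin{equation*}
P_G(\mathbf{p}) \;=\; \sum_{(S,\tau)} \Bigl(\prod_{v\in S^c} p_{\tau(v)}\Bigr)\, W_{H}(p_1,p_2),
\end{equation*}
summed over pairs with $\tau$ a proper coloring of $G[S^c]$ by colors in $\{3,\ldots,q\}$, where
\begin{equation*}
W_H(p_1,p_2) \;:=\; \sum_{\rho\text{ proper } 2\text{-coloring of } H} p_1^{|\rho^{-1}(1)|}\, p_2^{|\rho^{-1}(2)|}.
\end{equation*}
Edges between $S$ and $S^c$ impose no constraint because colors $1,2$ differ from colors $\geq 3$, so this formula is clean. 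Since the first factor does not involve $p_1,p_2$, it is enough to show each $W_{G[S]}$ is non-decreasing in $p_1 p_2$ when $p_1 + p_2 = s$ is fixed.

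This is exactly where the claw-free hypothesis does its work. The induced subgraph $H = G[S]$ is again claw-free, and $W_H$ vanishes unless $H$ is bipartite; but a bipartite claw-free graph has maximum degree at most $2$, since any vertex of degree at least $3$ in a bipartite graph has three pairwise non-adjacent neighbors, which would form an induced claw. Hence every connected component of $H$ is a path or an even cycle, and a direct enumeration gives the per-component contribution to $W_H$: an even cycle or even path on $m$ vertices contributes $2(p_1 p_2)^{m/2}$, while an odd path on $m$ vertices (including the isolated vertex, $m=1$) contributes $(p_1 p_2)^{(m-1)/2}(p_1 + p_2)$. With $p_1 + p_2 = s$ fixed, each such factor is a non-decreasing function of $p_1 p_2$, and so is their product over all components of $H$, which is exactly what was needed.

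I expect the only genuine content is the structural lemma that bipartite claw-free graphs have maximum degree at most $2$; everything else is bookkeeping, relying on the observation that the 2-color generating function on each component is a polynomial in the elementary symmetric functions $p_1 + p_2$ and $p_1 p_2$ alone, together with the elementary fact that $p_1 p_2$ is a Schur-concave function of $(p_1, p_2)$.
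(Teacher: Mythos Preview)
Your argument is correct and is essentially the paper's own proof: both condition on the set of vertices receiving colors $3,\ldots,q$ and their coloring, reduce to the two-color weight on the remaining induced subgraph, use the claw-free hypothesis to force that subgraph (when bipartite) to be a disjoint union of paths and even cycles, and then check the per-component contributions are monotone in $p_1p_2$ with $p_1+p_2$ fixed. Your explicit invocation of the majorization $(1/q,\ldots,1/q)\prec(1/(q-1),\ldots,1/(q-1),0)$ to deduce the final inequality is a nice touch the paper leaves implicit.
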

\begin{proof} Suppose we start with a distribution $p = (p_1, \ldots, p_q)$ on the colors $1, \ldots, q$. Fix $p_i$
for $i \geq 3$. Let $H \subseteq G$ be an induced subgraph of $G$. We denote by $C_H$ the set of all colorings of $H$ with colors $3$ to $q$.  Let $N(H, a_3, \ldots , a_q)$ denote the number of proper colorings of $H$
with $a_i$ vertices colored with color $i$. Note that $N(H, a_3, \ldots , a_q)$ is independent of the $p_i's$. Then, by Bayes' rule we have,

\begin{equation}
    \begin{split}
        & P_G(p_1, \ldots, p_q) \\
        & = \sum_{H \subseteq G}\sum_{(a_3, \ldots, a_q)}N(H, a_3, \ldots , a_q) \prod_{i=3}^{r}p_{i}^{a_i}
        \times P_{H'}\left(\frac{p_1}{p_1+p_2},\frac{p_2}{p_1+p_2}\right)\times (p_1 + p_2)^{|V(H')|},
    \end{split}
\end{equation}
where $H'$ is the subgraph induced by the remaining vertices, that is, $V(H') = V(G) \setminus V(H)$. To show that $P_G$ is schur-concave it suffices to show that
\begin{equation}
     P_{H'}\left(\frac{p_1}{p_1+p_2}, \frac{p_2}{p_1+p_2}\right)(p_1 + p_2)^{|V(H')|}
\end{equation}
is maximized when $p_1 = p_2$. To see this note that $H'$ is also claw-free since removing vertices keeps a claw-free graph claw-free. If $H'$
is not bipartite then it cannot have a proper coloring with 2 colors. Hence for the above term to be non-zero $H'$ must be a claw-free
bipartite graph. The only connected claw-free bipartite graphs are cycles of even length and paths. To see this, suppose $V(H')$ is partitioned
into sets $A,B$ such that there are no edges lying entirely inside $A$ or $B$. So for any $v \in A$ all its neighbors lie in $B$. Suppose $v$
has three neighbors $v_1, v_2, v_3$. Since they are all in $B$ there are no edges between them. This leads to a claw on $v, v_1, v_2, v_3$ and
contradicts the fact that $H'$ is claw-free. Hence the maximum degree of $H'$ is 2, thus implying that $H'$ is a disjoint union of cycles and
paths. Further since $H'$ is bipartite the cycles can only have even length. Suppose $H''$ is a connected component of $H'$. Then, $H''$ is either a path or an even cycle, hence,

\begin{equation}
    \begin{split}
        & P_{H''}\left(\frac{p_1}{p_1+p_2}, \frac{p_2}{p_1+p_2}\right)(p_1 + p_2)^{|V(H'')|}\\ \\
        & = 2(p_1p_2)^k \text{ or } p_1^kp_2^{k+1} +  p_2^kp_1^{k+1},
    \end{split}
\end{equation}
depending on whether $H''$ has $2k$ or $2k+1$ vertices. In both cases this is maximized when $p_1 = p_2$ (under the constraint that $p_1 + p_2$ is fixed). By multiplicativity, the same follows for $P_{H'}$. So,
\begin{equation}
    P(p_1, \ldots, p_q) \leq P\left(\frac{p_1+p_2}{2}, \frac{p_1+p_2}{2}, p_3, \ldots, p_q\right)
\end{equation}
and by symmetry,
\begin{equation}
    P(p_1, \ldots, p_q) \leq P\left(p_1, \ldots, p_{i-1},\frac{p_i+p_j}{2}, p_{i+1}, \ldots, p_{j-1}, \frac{p_i+p_j}{2}, p_{j+1}, \ldots, p_q\right),
\end{equation}
which proves that $P_G$ is schur concave. The above inequality attains equality if $p_i = p_j$. Further, since $P_{G}$ is a continuous function
on a compact space, a maximum is attained and by the above inequality it must be attained when $p_1= \cdots =p_q = 1/q$.

This completes the proof.

\end{proof}

\section{Acknowledgement}

The author would like thank Prof. Persi Diaconis for his guidance and for telling her about the shameful conjecture and related problems. Many thanks to P\'eter Csikv\'ari for numerous helpful discussions and to the anonymous referees for their detailed feedback and especially for the current, much shorter proof of Theorem \ref{Shameful}. 

\bibliography{SukhadaThesis}{}
\end{document}